\documentclass{article}
\usepackage{amssymb,amsfonts,latexsym,wasysym}

\setlength{\parskip}{1ex} \setlength{\headsep}{0.5cm}
\setlength{\topmargin}{0.5cm} \setlength{\textheight}{21.3cm}
\setlength{\footskip}{1cm} \setlength{\oddsidemargin}{1cm}
\setlength{\evensidemargin}{0.5cm} \setlength{\textwidth}{13.9cm}

\newtheorem{theorem}{Theorem}[section]
\newtheorem{remark}[theorem]{Remark}
\newtheorem{lemma}[theorem]{Lemma}
\newtheorem{fact}[theorem]{Facts}
\newtheorem{prop}[theorem]{Proposition}
\newtheorem{cor}[theorem]{Corollary}
\newtheorem{defi}[theorem]{Definition}

\newcommand{\cf}{L(Q^\mathrm{cf}_{\aleph_0})}
\newcommand{\Tb}{\mathbf{T}}

\newenvironment{proof}{\vspace{-0.25cm}
{\bf Proof}: }{\hfill $\Box$}

\begin{document}
\title{Universal theories and compactly expandable models}
\author{Enrique Casanovas and Saharon Shelah	\thanks{Research partially supported by European Research Council grant 338821. No. 1116 on Shelah's publication list. The first author has been partially funded by the Spanish government grant MTM2014-59178-P. }}
\date{May 5, 2017. Last revised February 23, 2019.}
\maketitle
\begin{abstract} Our aim is to solve a quite old question on the difference between expandability and compact expandability. Toward this, we further investigate the logic of countable cofinality.
\end{abstract}

\section{Introduction}

In this article we solve an open problem on expandability of models by an application of   some new results we obtain  on the logic $\cf$, first-order logic with the additional quantifier $Q^\mathrm{cf}_{\aleph_0}$  of cofinality $\aleph_0$.  The syntax of $\cf$ allows the construction of formulas of the form   $Q^\mathrm{cf}_{\aleph_0}xy\varphi(x,y,\overline{z})$.   The meaning of  this formula in a structure $M$ is given by the rule:  $M\models Q^\mathrm{cf}_{\aleph_0}xy\varphi(x,y,\overline{a})$  if and only if  the relation $\{(b,c): M\models\varphi(b,c,\overline{a})\}$ is a linear ordering of the set $\{c: M\models \exists x\varphi(x,c,\overline{a})\}$  and has cofinality $\aleph_0$.

The second author has  introduced  $\cf$  in~\cite{Sh43} and has proved  that    is a fully compact logic, i.e., a set $\Sigma$ of $\cf$-sentences (of any cardinality) has a model if every finite subset of $\Sigma$ has a model. In the same article it is proved that $\cf$ satisfies the L\"owenheim-Skolem theorem down to $\aleph_1$, in the following particularly strong form combining downward an upward L\"owenheim-Skolem theorems: if $\Sigma$ is a set of $\cf$-sentences having an infinite model, then $\Sigma$ has a model in every cardinal $\kappa\geq \aleph_1,|\Sigma|$.  See theorems 2.5 and 2.6 of~\cite{Sh43} applied to $n=0$, $\mu=1$ and $\lambda_0=\aleph_0$.  Compactness of  $\cf$ will be used here to show that for any cardinal $\kappa= 2^{<\kappa}>\aleph_0$, $\cf$  has a $\kappa$-universal  theory.  The L\"owenheim-Skolem theorem won't be used until the very end, in the proof of Proposition~\ref{propos}.

In section 2 we define some classes of $\cf$-theories, in particular the class $\Tb^\mathbf{ec}_{<\kappa}$ of theories with  vocabulary of cardinality $<\kappa$ which are in some sense an analog of existentially closed models and its extension $\Tb^\mathbf{ab}_{<\kappa}$, the class of theories with vocabulary of cardinality $<\kappa$ which are amalgamation bases.  We prove that if $\kappa= 2^{<\kappa}>\aleph_0$, then any amalgamation base $T_\ast\in\Tb^\mathbf{ab}_{<\kappa}$ can be extended to some existentially closed $T\in\Tb^\mathbf{ec}_{<\kappa^+}$ which is universal over $T_\ast$, meaning  that every consistent extension of $T_\ast$ of cardinality $\leq \kappa$ can be embedded over $T_\ast$ in $T$.  The proof only uses compactness and some basic facts  of the logic $\cf$,   such as  finitary character and possibility of renaming, and they can be easily  generalized to other similar compact logics (see Remark~\ref{amt}).

The notions of expandability and compact expandability were introduced by the first author in~\cite{Cas95} and further discussed in~\cite{Cas96}. They are notions of largeness for models related to first-order theories, similar to resplendency but without the use of parameters. In these articles the existence of a compactly expandable model which is not expandable  is left as an open problem. In Section 3 we solve the problem using the tools developed in Section 2.

To simplify notation, we will only consider relational languages, but the results can be  easily adapted to languages with constants and function symbols. A vocabulary $\tau$ is a set of symbols, predicates in our case.  If $T$ is a theory  (a first-order theory or a $\cf$-theory)  $\tau(T)$  will be the vocabulary of $T$.    $\cf(\tau)$ is the set of $\cf$-sentences of vocabulary $\tau$. Along the whole article $\kappa,\mu$ are infinite cardinal numbers.  Along all  this paper, consistent means finitely satisfiable. Since we are dealing with a compact logic, this is the same thing as being satisfiable.

\section{Universal theories in $\cf$}

In this section we work with sentences and theories of the compact logic $\cf$ of countable cofinality.

\begin{defi} Let $\tau_1,\tau_2$ be vocabularies.
\begin{enumerate}
\item  An \emph{embedding} of $\tau_1$ in $\tau_2$ is  a one-to-one mapping $f:\tau_1\rightarrow\tau_2$ that preserves  arities. It is \emph{over $\tau_0\subseteq \tau_1$} if every symbol of $\tau_0$ remains fixed by $f$. It is an \emph{isomorphism} if it is moreover surjective.  If  $\overline{R}= \langle R_1,\ldots,R_n\rangle $  and  $\overline{S}=\langle S_1,\ldots,S_m\rangle$ are tuples of predicates, we write $\overline{R} \approx \overline{S}$  if  they have the same length $n=m$  and the mapping defined by $R_i\mapsto S_i$ is an isomorphism of vocabularies, i.e.,  it is one-to-one and preserves arities.
\item Any embedding $f:\tau_1\rightarrow \tau_2$  induces a \emph{renaming}, a mapping  from $\cf(\tau_1)$ into $\cf(\tau_2)$ for which we will use the same notation $f$. If  $\sigma\in \cf(\tau_1)$, $f(\sigma)$ is the sentence obtained by substitution of every symbol $R\in\tau_1$ of $\sigma$  by the corresponding symbol $f(R)\in\tau_2$.
\item Assume $\tau_1\subseteq \tau_2$.  The notation $\psi(\overline{R},\overline{S})$ will be used for $\cf(\tau_2)$-sentences with the understanding that   $\overline{R}\subseteq  \tau_1$ and  $\overline{S}\subseteq \tau_2\smallsetminus \tau_1$ are  tuples of predicates  without repetitions and they include all predicates appearing in  the sentence.
\end{enumerate}
\end{defi}

\begin{defi} 
\begin{enumerate}
\item Let $\Tb_{<\kappa}$ be the class of consistent $\cf$-theories $T$ such that  $\tau(T)$ has cardinality $<\kappa$.  
\item  Let $\Tb^\mathbf{c}_{<\kappa}$ be the class of all theories $T\in \Tb_{<\kappa}$ which are complete in $\cf (\tau(T))$, so in particular are closed under conjunction.
 \item Let $\Tb^\mathbf{ab}_{<\kappa}$ be the class of all $T_0\in \Tb^\mathbf{c}_{<\kappa}$ which are  amalgamation bases in the following sense:  if  $\tau_0=\tau(T_0)$  and  $\tau_1,\tau_2$ are vocabularies  with $\tau_0=\tau_1\cap \tau_2$  and  $T_l\in \Tb_{<\kappa}$  for $l=1,2$  are  theories with $\tau(T_l)=\tau_l$  and  $T_0=T_1\cap T_2$,  then $T_1\cup T_2$ is consistent  in $\cf$.
 \item Finally, let $\Tb^\mathbf{ec}_{<\kappa}$ be the class of all theories $T\in \Tb^\mathbf{c}_{<\kappa}$ which are existentially closed in the following sense:  for any vocabulary $\tau_1\supseteq \tau(T)$, for any $\cf$-sentence $\psi(\overline{R},\overline{S})$, where     $\overline{R}\subseteq  \tau(T)$ and  $\overline{S}\subseteq \tau_1\smallsetminus \tau(T)$  are tuples of predicates without repetitions, if $T\cup\{\psi(\overline{R},\overline{S})\}$  is consistent, then $\psi(\overline{R},\overline{S}^\prime)\in T$  for some tuple of predicates $\overline{S}^\prime\subseteq \tau(T)$  such that $\overline{S}\approx\overline{S}^\prime$.
\end{enumerate}
\end{defi}

Existentially closed theories   are  a parallel of existentially closed models. We do not mean, of course,  that they are theories  of existentially closed models.

\begin{lemma}\label{amalg}\begin{enumerate}
\item $\Tb_{<\kappa}\supseteq \Tb^\mathbf{c}_{<\kappa} \supseteq \Tb^\mathbf{ab}_{<\kappa}\supseteq \Tb^\mathbf{ec}_{<\kappa}$.
\item Any  $T\in \Tb_{<\kappa}$ can be extended to a member of $\Tb^\mathbf{c}_{<\kappa}$ with the same vocabulary.
\item Assume  $\tau_i\cap \tau_j= \tau$  for  $i<j<\mu$, $T_i\in \Tb_{<\kappa}$, $T\in \Tb^\mathbf{ab}_{<\kappa}$, $\tau(T_i)= \tau_i$, $\tau(T)= \tau$, and  $T\subseteq T_i$. Then $\bigcup_{i<\mu}T_i$ is consistent.
\item If $\kappa >\aleph_0$  and $T\in \Tb_{<\kappa}$, then it can be extended to a member of $\Tb^\mathbf{ec}_{<\kappa}$.
\item For $l=\mathbf{c}, \mathbf{ab}, \mathbf{ec}$: if $f$ is an isomorphism from $\tau_1$ onto $\tau_2$, and $T\in\Tb_{<\kappa}^l$, then $f(T)\in\Tb_{<\kappa}^l$.
\end{enumerate}
\end{lemma}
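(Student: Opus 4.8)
The plan is to prove the five items in order, since each rests on the previous ones. The inclusions $\Tb_{<\kappa}\supseteq\Tb^{\mathbf c}_{<\kappa}$, $\Tb^{\mathbf c}_{<\kappa}\supseteq\Tb^{\mathbf{ab}}_{<\kappa}$ and $\Tb^{\mathbf c}_{<\kappa}\supseteq\Tb^{\mathbf{ec}}_{<\kappa}$ in (1) are immediate from the definitions, so only $\Tb^{\mathbf{ec}}_{<\kappa}\subseteq\Tb^{\mathbf{ab}}_{<\kappa}$ needs work. Take $T\in\Tb^{\mathbf{ec}}_{<\kappa}$, $\tau_0=\tau(T)$, and an amalgamation datum $\tau_0=\tau_1\cap\tau_2$, $T_l\in\Tb_{<\kappa}$ with $\tau(T_l)=\tau_l$, $T=T_1\cap T_2$. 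By full compactness of $\cf$ it suffices, for all finite $\Delta_1\subseteq T_1$ and $\Delta_2\subseteq T_2$, to produce a common model. Put $\psi_1:=\bigwedge\Delta_1$ and write $\psi_1=\psi_1(\overline R,\overline S)$ with $\overline R\subseteq\tau_0$ and $\overline S\subseteq\tau_1\smallsetminus\tau_0$ listing all its predicates. Since $T\cup\Delta_1\subseteq T_1$ is consistent, so is $T\cup\{\psi_1(\overline R,\overline S)\}$, and existential closedness of $T$ gives $\overline S'\subseteq\tau_0$ with $\overline S'\approx\overline S$ and $\psi_1(\overline R,\overline S')\in T\subseteq T_2$. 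Then $\{\psi_1(\overline R,\overline S')\}\cup\Delta_2\subseteq T_2$ is consistent; take a $\tau_2$-model $N$ and expand it to a $(\tau_1\cup\tau_2)$-structure $N'$ by setting $S_i^{N'}:=(S_i')^{N}$ for $S_i\in\overline S$ (the remaining symbols of $\tau_1\smallsetminus\tau_0$ interpreted arbitrarily). By invariance under renaming $N'\models\psi_1(\overline R,\overline S)$, hence $N'\models\Delta_1$, while $N'\restriction\tau_2=N\models\Delta_2$. For (2), extend $T$ by Zorn's lemma to a maximal consistent $T'\subseteq\cf(\tau(T))$; compactness shows that for each $\sigma$ at least one of $T'\cup\{\sigma\}$, $T'\cup\{\neg\sigma\}$ is consistent, so maximality forces $T'$ to be complete.

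For (3), reduce by full compactness to finitely many indices $i_1,\dots,i_n$ and induct on $n$; the case $n=1$ is the consistency of $T_{i_1}$. For the step, set $\tau':=\tau_{i_1}\cup\dots\cup\tau_{i_{n-1}}$, which has cardinality $<\kappa$, and by (2) let $T'$ be a completion in $\cf(\tau')$ of $T_{i_1}\cup\dots\cup T_{i_{n-1}}$, consistent by the inductive hypothesis. Since the $\tau_{i_j}$ pairwise meet in $\tau$, we have $\tau'\cap\tau_{i_n}=\tau$, so $T'\cap T_{i_n}\subseteq\cf(\tau)$; as $T\subseteq T'\cap T_{i_n}$, this intersection is consistent, and $T$ is complete in $\cf(\tau)$, it equals $T$. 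Now the amalgamation-base property of $T$, applied to $\tau',\tau_{i_n}$ and $T',T_{i_n}$, yields that $T'\cup T_{i_n}\supseteq T_{i_1}\cup\dots\cup T_{i_n}$ is consistent.

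Item (4) is the heart of the matter. Write $\lambda^\ast:=\max(|\tau(T)|,\aleph_0)$, which is $<\kappa$ since $\kappa>\aleph_0$. Starting from a completion $T^{(0)}$ of $T$, build a chain $T^{(0)}\subseteq T^{(1)}\subseteq\cdots$ of $\cf$-theories: given $T^{(n)}$, ``round $n$'' runs through all the $\le\lambda^\ast$ requirements, up to renaming, of the form $(\psi(\overline R,\overline S),\overline R)$ with $\overline R\subseteq\tau(T^{(n)})$ a finite tuple and $\overline S$ fresh; at the sub-stage attending to such a requirement, if the current theory is consistent with $\psi(\overline R,\overline S)$ we adjoin a fresh renamed copy $\overline S'\approx\overline S$ together with $\psi(\overline R,\overline S')$, and at limit sub-stages we take unions. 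Single adjunctions preserve consistency by invariance under renaming, and unions preserve it by full compactness; since each round adds at most $\lambda^\ast$ symbols, every vocabulary in sight has cardinality $\le\lambda^\ast$. Let $T^{\circ}:=\bigcup_n T^{(n)}$ and, by (2), let $T$ be a completion of $T^{\circ}$ with the same vocabulary; then $T$ is consistent and complete with $|\tau(T)|=|\tau(T^{\circ})|\le\lambda^\ast<\kappa$. Finally $T\in\Tb^{\mathbf{ec}}_{<\kappa}$: if $\overline R\subseteq\tau(T)$ and $T\cup\{\psi(\overline R,\overline S)\}$ is consistent, the finite tuple $\overline R$ already lies in some $\tau(T^{(n)})$, reducts of a model of $T\cup\{\psi(\overline R,\overline S)\}$ show the requirement stays consistent at the sub-stage of round $n$ attending to it, so a witness $\psi(\overline R,\overline S')\in T^{(n+1)}\subseteq T$ is produced; since $T^{\circ}$ and $T$ share a vocabulary, completing does not destroy this. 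The main obstacle is precisely this bookkeeping: one must organize the rounds so that every requirement over the final vocabulary is eventually attended to, and verify that the vocabulary never escapes past $\lambda^\ast$ — which is why it is convenient to process inside round $n$ only the (finitely many predicates in) $\overline R$ already present in $\tau(T^{(n)})$, deferring the rest to later rounds.

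Item (5) is a routine transport along $f$: it induces a satisfaction-preserving bijection $\cf(\tau_1)\to\cf(\tau_2)$ and a renaming of $\tau_1$-structures into $\tau_2$-structures, under which consistency and completeness are obviously preserved; for the amalgamation-base and existentially closed conditions one first extends $f$ to isomorphisms between the auxiliary vocabularies occurring in those conditions (using that $\tau_0=\tau_1\cap\tau_2$ and the freedom to choose disjoint fresh copies) and then argues exactly as above. The bound $<\kappa$ on the vocabulary is preserved since $f$ is a bijection.
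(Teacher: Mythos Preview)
Your proof is correct and follows essentially the same approach as the paper. The only notable bookkeeping difference is in item~4: the paper completes to a full $\cf(\tau_i)$-theory at every stage $T_i$ of the $\omega$-chain (so the union is automatically complete), whereas you build the chain $T^{(n)}$ without completing and take a single completion of $T^\circ=\bigcup_n T^{(n)}$ at the end; both arrangements work, since in either case consistency of $\psi(\overline R,\overline S)$ with the final theory propagates down to the intermediate stage where the requirement is processed.
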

\begin{proof}  \emph{1}.  We check that  $ \Tb^\mathbf{ec}_{<\kappa}\subseteq \Tb^\mathbf{ab}_{<\kappa}$.  Let $T_0\in  \Tb^\mathbf{ec}_{<\kappa}$  be of vocabulary  $\tau_0$,  let $\tau_1,\tau_2$ be vocabularies  such that  $\tau_0=\tau_1\cap \tau_2$  and let  $T_l\in \Tb^\mathbf{c}_{<\kappa}$  for $l=1,2$  be corresponding theories with $\tau(T_l)=\tau_l$  and  $T_0=T_1\cap T_2$.   Toward  a contradiction, assume  $\psi(\overline{R},\overline{S})\in T_1$,  $\overline{R}\subseteq \tau_0$, $\overline{S}\subseteq \tau_1\smallsetminus \tau_0$ have no repetitions and $\{\psi(\overline{R},\overline{S})\} \cup T_2$ is inconsistent.  Since $\{\psi(\overline{R},\overline{S})\}\cup T_0$ is consistent and $T_0\in \Tb^\mathbf{ec}_{<\kappa}$, for some tuple of predicates $\overline{S}^\prime\approx \overline{S}$ we have  $\psi(\overline{R},\overline{S}^\prime)\in T_0$ and hence  $\{\psi(\overline{R},\overline{S}^\prime)\}\cup T_2$ is consistent. Since the predicates of $\overline{S}$ do not belong to $\tau_2$, we may rename again the formula, showing the consistency of $\{\psi(\overline{R},\overline{S})\}\cup T_2$, which is against our assumption.

\emph{2}.  By compactness of the logic $\cf$.

\emph{3}.  The amalgamation of finitely many theories with common intersection in  $\Tb^\mathbf{ab}_{<\kappa}$ can be proved by induction. The general case follows by compactness.

\emph{4}. Fix a countable vocabulary $\tau_\infty$ disjoint of $\tau(T)$ and containing for each natural number $n\geq 1$ infinitely many  $n$-ary predicates.  Let $T_0=T$ and $\tau_0= \tau(T)$.  We claim that for some vocabulary   $\tau_1\supseteq \tau_0$  of cardinality $|\tau_0|+\aleph_0$ and disjoint of $\tau_\infty$, there is some  complete  $\cf(\tau_1)$-theory  $T_1\supseteq T_0$ such that for every  $\cf$-sentence $\psi(\overline{R},\overline{S})$, if $T_1\cup\{\psi(\overline{R},\overline{S})\}$ is consistent and $\overline{R}\subseteq \tau_0$ and  $\overline{S}\subseteq  \tau_\infty$, then there is a tuple of predicates $\overline{S}^\prime\subseteq \tau_1$ such that $\overline{S}^\prime\approx \overline{S}$ and   $\psi(\overline{R},\overline{S}^\prime)\in T_1$.  For this purpose, let $\mu=|\tau_0|+\aleph_0$ and   let us enumerate $(\sigma_i: i<\mu)$ all  $\cf(\tau_0\cup\tau_\infty)$-sentences.  We inductively define a continuous  ascending chain $(\Sigma_i:i< \mu)$ of sets of $\cf$-sentences.  Start with $\Sigma_0= T_0$.  If  $\sigma_i=\psi(\overline{R},\overline{S})$  (with $\overline{R}\subseteq \tau_0$ and $\overline{S}\subseteq \tau_\infty$) is consistent with $\Sigma_i$, take a new tuple $\overline{S}^\prime\approx\overline{S}$ of predicates  and put $\Sigma_{i+1}=\Sigma_i\cup\{\psi(\overline{R},\overline{S}^\prime)\}$. Otherwise, $\Sigma_{i+1}=\Sigma_i$. In the limit case take the union. Then  let $\tau_1$ be the vocabulary of $ \bigcup_{i<\mu}\Sigma_i$  and let $T_1$ be a complete $\cf(\tau_1)$-theory extending $\bigcup_{i<\mu}\Sigma_i$.

Iterating, we  define $T_{i+1}$ for $i<\omega$ as a complete extension of $T_i$ in a vocabulary $\tau_{i+1}\supseteq \tau_i=\tau(T_i)$ of cardinality $|\tau_{i+1}|=|\tau_i|+\aleph_0$  which is disjoint with $\tau_\infty$. We require    that for every  $\cf(\tau_i\cup\tau_\infty)$-sentence $\psi(\overline{R},\overline{S})$, if $T_{i+1}\cup\{\psi(\overline{R},\overline{S})\}$ is consistent and $\overline{R}\subseteq \tau_i$ and  $\overline{S}\subseteq  \tau_\infty$, then there is a tuple of predicates $\overline{S}^\prime\subseteq \tau_{i+1}$  such that $\overline{S}^\prime\approx \overline{S}$ and $\psi(\overline{R},\overline{S}^\prime)\in T_{i+1}$.  Then   $T^\prime=\bigcup_{i<\omega}T_i$ has the required properties: $T^\prime\in \Tb^\mathbf{ec}_{<k}$  extends $T$. In fact,  its vocabulary $\tau^\prime= \bigcup_{i<\omega}\tau_i$ verifies $|\tau^\prime|= |\tau(T)|+\aleph_0<\kappa$.  Moreover, if $T^\prime\cup\{\psi(\overline{R},\overline{S})\}$ is consistent,  $\overline{R}\subseteq \tau^\prime$  and $\overline{S}\cap\tau^\prime=\emptyset$, we may assume that $\overline{S}\subseteq \tau_\infty$  and we may fix $i<\omega$ such that $\overline{R}\subseteq \tau_i$. Hence $\psi(\overline{R},\overline{S}^\prime)\in T_{i+1}\subseteq T^\prime$  for some  $\overline{S}^\prime\approx\overline{S}$.

\emph{5}.  Obvious.
\end{proof}

\begin{lemma}\label{lemma}\begin{enumerate}
\item Assume $l=\mathbf{c},\mathbf{ab}$ or $\mathbf{ec}$ and $T_i\in \Tb^l_{<\kappa}$ for every $i<\delta$.  If  $T_i\subseteq T_j$ for all $i<j<\delta$  and $\bigcup_{i<\delta}T_i$ has vocabulary $\tau_\delta$ with     $|\tau_\delta|<\kappa$  (in particular, if $\mathrm{cf}(\delta) <\mathrm{cf}(\kappa)$), then $\bigcup_{i<\delta}T_i\in \Tb^l_{<\kappa}$.
\item Assume $l=\mathbf{c},\mathbf{ab}$ or $\mathbf{ec}$, $T_0\subseteq T_2\in \Tb^l_{<\kappa}$, $\mu<\kappa$  and $|T_0|\leq \mu$.  Then $T_0\subseteq T_1\subseteq T_2$ for some $T_1\in\Tb^l_{<\mu^+}$.
\item If  $\aleph_0<\kappa_1<\kappa_2$  and $|\tau(T)|<\kappa_1$, then for  $l=\mathbf{c},\mathbf{ab}$ or $\mathbf{ec}$: $T\in \Tb^l_{<\kappa_1}$ if and only if  $T\in \Tb^l_{<\kappa_2}$.  
\item Let $|\tau(T)|<\kappa$. Then for  $l=\mathbf{c},\mathbf{ab}$ or $\mathbf{ec}$: $T\in \Tb^l_{<\kappa}$  if and only if for some club $E\subseteq [\tau(T)]^{\leq \aleph_0}$,  for every $\tau\in E$,  $T\cap \cf (\tau)\in \Tb^l_{<\aleph_1}$.
\end{enumerate}
\end{lemma}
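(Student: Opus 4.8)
The plan is to obtain all four parts from the single operation of \emph{restriction}, $T\mapsto T\cap\cf(\tau)$ for $\tau\subseteq\tau(T)$, together with two standing observations. \textbf{(O1)} If $T$ is complete in $\cf(\tau(T))$ and $\tau\subseteq\tau(T)$, then $T\cap\cf(\tau)$ is automatically complete in $\cf(\tau)$ (for $\sigma\in\cf(\tau)$ one of $\sigma,\neg\sigma$ lies in $T$, and both lie in $\cf(\tau)$); likewise $T'\cap\cf(\tau)=T$ whenever $T\subseteq T'$ and $T$ is complete in $\cf(\tau)$. \textbf{(O2)} For a complete $T$ with $\tau(T)=\tau_0$ and $|\tau_0|<\kappa$ (and $\kappa>\aleph_0$), membership in $\Tb^\mathbf{ab}_{<\kappa}$ is equivalent to a ``two sentence'' criterion: fixing countable reservoir vocabularies $\tau_\infty^1,\tau_\infty^2$, disjoint from each other and from $\tau_0$ and each with infinitely many predicates of every arity, $T\in\Tb^\mathbf{ab}_{<\kappa}$ iff for all $\cf$-sentences $\chi_1(\overline R_1,\overline S_1),\chi_2(\overline R_2,\overline S_2)$ with $\overline R_i\subseteq\tau_0$ and $\overline S_i\subseteq\tau_\infty^i$, if $T\cup\{\chi_1\}$ and $T\cup\{\chi_2\}$ are both consistent then $T\cup\{\chi_1,\chi_2\}$ is consistent. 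The nontrivial direction: given $V_1,V_2$ with $V_1\cap V_2=T$ and $\tau(V_1)\cap\tau(V_2)=\tau_0$, compactness reduces consistency of $V_1\cup V_2$ to consistency of $T\cup\{\chi_1,\chi_2\}$ with $\chi_i$ the conjunction of a finite $\Delta_i\subseteq V_i$; renaming the predicates of $\chi_i$ outside $\tau_0$ into $\tau_\infty^i$ (legitimate by possibility of renaming, and harmless since the two renamings act on disjoint symbols and fix $\tau_0$) lets one apply the criterion. The easy direction takes $V_i=T\cup\{\chi_i\}$, noting $V_1\cap V_2=T$ since $\chi_1\notin\cf(\tau(V_2))$ (one may assume $\overline S_i\neq\emptyset$).

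For part 1, put $T_\delta=\bigcup_{i<\delta}T_i$ (assume $\delta$ limit). Consistency and $|\tau(T_\delta)|<\kappa$ are clear (the latter holds when $\mathrm{cf}(\delta)<\mathrm{cf}(\kappa)$ since the $\tau(T_i)$ increase, so $\tau(T_\delta)$ is a union of fewer than $\mathrm{cf}(\kappa)$ vocabularies each of size $<\kappa$). Completeness of $T_\delta$ follows from finitary character: any $\sigma\in\cf(\tau(T_\delta))$ lies in $\cf(\tau(T_i))$ for some $i$, which decides it. The existentially closed case is the same: if $T_\delta\cup\{\psi(\overline R,\overline S)\}$ is consistent with $\overline R$ finite, pick $i$ with $\overline R\subseteq\tau(T_i)$, so $T_i\cup\{\psi(\overline R,\overline S)\}$ is consistent and $T_i\subseteq T_\delta$ contains a witness $\psi(\overline R,\overline S')$ with $\overline S'\subseteq\tau(T_i)$. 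The amalgamation case needs the restriction trick, since consistency of $T_i\cup\{\chi_1,\chi_2\}$ does not yield consistency of the larger $T_\delta\cup\{\chi_1,\chi_2\}$: given $V_1,V_2$ with $V_1\cap V_2=T_\delta$, $\tau(V_l)=\rho_l$, $\rho_1\cap\rho_2=\tau(T_\delta)$, and finite $\Delta_l\subseteq V_l$, choose $i$ so that every predicate of $\Delta_1\cup\Delta_2$ in $\tau(T_\delta)$ already lies in $\tau_i:=\tau(T_i)$, and set $\rho_l^\ast:=\tau_i\cup(\rho_l\smallsetminus\tau(T_\delta))$, $W_l:=V_l\cap\cf(\rho_l^\ast)$. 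By (O1), $\rho_1^\ast\cap\rho_2^\ast=\tau_i$, $W_1\cap W_2=V_1\cap V_2\cap\cf(\tau_i)=T_\delta\cap\cf(\tau_i)=T_i$, with $T_i\subseteq W_l$, $\Delta_l\subseteq W_l$, and $|\rho_l^\ast|\le|\rho_l|<\kappa$; applying $T_i\in\Tb^\mathbf{ab}_{<\kappa}$ gives $W_1\cup W_2$ consistent, so $\Delta_1\cup\Delta_2$ is, and compactness finishes.

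Parts 2 and 4 (forward directions) contain the main obstacle: $T\cap\cf(\tau)$ need not be existentially closed (or an amalgamation base) even when $T$ is, so $\tau$ must be closed under a well-chosen operation. For part 2, fix a countable reservoir $\tau_\infty$ disjoint from $\tau(T_2)$ and build $\tau_1=\bigcup_{n<\omega}\tau_1^n$ with $\tau_1^0=\tau(T_0)$; given $\tau_1^n$ of size $\le\mu$, for each $\psi(\overline R,\overline S)$ with $\overline R\subseteq\tau_1^n$ and $\overline S\subseteq\tau_\infty$ finite, if $T_2\cup\{\psi(\overline R,\overline S)\}$ is consistent add to $\tau_1^{n+1}$ an existentially closed witness $\overline S'\subseteq\tau(T_2)$ with $\psi(\overline R,\overline S')\in T_2$; if it is \emph{inconsistent}, by compactness choose a finite $T_2^0\subseteq T_2$ with $T_2^0\cup\{\psi(\overline R,\overline S)\}$ inconsistent and add the predicates of $T_2^0$ to $\tau_1^{n+1}$. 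There are $\le\mu$ sentences per stage, so $|\tau_1|\le\mu$; set $T_1:=T_2\cap\cf(\tau_1)$, which by (O1) is complete and satisfies $T_0\subseteq T_1\subseteq T_2$. For $\Tb^\mathbf{ec}_{<\mu^+}$: if $T_1\cup\{\psi(\overline R,\overline S)\}$ is consistent with $\overline R\subseteq\tau_1$, pick $n$ with $\overline R\subseteq\tau_1^n$ and rename $\overline S$ into $\tau_\infty$; the ``inconsistent'' clause cannot have fired at stage $n$, since the $T_2^0$ it would have added lies in $T_2\cap\cf(\tau_1)=T_1$ and would make $T_1\cup\{\psi(\overline R,\overline S)\}$ inconsistent; so a witness $\overline S'\subseteq\tau_1$ with $\psi(\overline R,\overline S')\in T_1$ was added. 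The case $l=\mathbf{ab}$ is identical except that at stage $n$ one acts only on \emph{pairs} $\psi_1,\psi_2$ for which some $T_2\cup\{\psi_i\}$ is inconsistent (adding a refuting $T_2^0$); when both are consistent, $T_2\cup\{\psi_1,\psi_2\}$ is consistent as $T_2\in\Tb^\mathbf{ab}_{<\kappa}$, hence so is $T_1\cup\{\psi_1,\psi_2\}$, and no action is needed, so $T_1$ satisfies the criterion (O2). ($l=\mathbf{c}$: take $\tau_1=\tau(T_0)$.) Part 4, forward, is the same construction in club form: the maps $\psi\mapsto(\text{witness or refuting predicates})$ are finitary functions $[\tau(T)]^{<\omega}\to[\tau(T)]^{<\omega}$, countably many per arity pattern, whose common closure points form a club $E\subseteq[\tau(T)]^{\le\aleph_0}$, and for $\tau\in E$ the above shows $T\cap\cf(\tau)\in\Tb^l_{<\aleph_1}$. ($l=\mathbf{c}$: take $E=[\tau(T)]^{\le\aleph_0}$.)

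The remaining directions are bookkeeping. In part 3, for $l=\mathbf{c},\mathbf{ec}$ the two defining conditions are identical once $|\tau(T)|<\kappa_1$ (which entails $<\kappa_2$), since neither condition mentions $\kappa$ otherwise; for $l=\mathbf{ab}$, the implication $\Tb^\mathbf{ab}_{<\kappa_2}\to\Tb^\mathbf{ab}_{<\kappa_1}$ is trivial for such $T$ and the converse follows from (O2), whose statement is $\kappa$-free (using $\kappa_1>\aleph_0$ so the auxiliary $T\cup\{\chi_i\}$ have vocabulary of size $<\kappa_1$). In part 4, backward, assume such an $E$; then $T$ is consistent and complete by finitary character (any relevant finite datum lies in some $\cf(\tau)$, $\tau\in E$). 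For $l=\mathbf{ec}$: if $T\cup\{\psi(\overline R,\overline S)\}$ is consistent with $\overline R$ finite, pick $\tau\in E$ with $\overline R\subseteq\tau$; then $(T\cap\cf(\tau))\cup\{\psi(\overline R,\overline S)\}$ is consistent, so $T\cap\cf(\tau)$ supplies $\overline S'\subseteq\tau\subseteq\tau(T)$ with $\psi(\overline R,\overline S')\in T$. For $l=\mathbf{ab}$: given $\chi_1,\chi_2$ as in (O2), pick $\tau\in E$ with $\overline R_1\cup\overline R_2\subseteq\tau$; consistency of each $(T\cap\cf(\tau))\cup\{\chi_i\}$ gives, via $T\cap\cf(\tau)\in\Tb^\mathbf{ab}_{<\aleph_1}$ and its criterion, consistency of $(T\cap\cf(\tau))\cup\{\chi_1,\chi_2\}$, a fortiori of $T\cup\{\chi_1,\chi_2\}$, so $T\in\Tb^\mathbf{ab}_{<\kappa}$ by (O2); only cofinality of $E$ is used here.
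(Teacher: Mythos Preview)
Your argument is essentially correct and follows the paper's approach; in fact you supply much more detail than the paper, which dismisses part~1 as ``clear, using compactness'', part~3 as ``clear'', and part~4 as ``follows from 1 and 2''. Your construction in part~2 (iterated $\omega$-closure inside $\tau(T_2)$, adding either a witness $\overline S'$ or a finite refuting piece of $T_2$) is exactly the paper's construction, and your explicit two-sentence criterion (O2) for $\Tb^{\mathbf{ab}}$ is a clean abstraction of what the paper uses implicitly when it writes ``one easily checks that $T_1\in\Tb^{\mathbf{ab}}_{<\mu^+}$''.

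There is one slip. In part~4, backward direction, case $l=\mathbf{ab}$, you write that consistency of $(T\cap\cf(\tau))\cup\{\chi_1,\chi_2\}$ gives ``a fortiori'' consistency of $T\cup\{\chi_1,\chi_2\}$. This is the wrong direction: the second set is larger, not smaller. The repair is immediate and is already implicit in your remark that ``only cofinality of $E$ is used here'': to show $T\cup\{\chi_1,\chi_2\}$ consistent, invoke compactness and, for a given finite $\Delta\subseteq T$, choose $\tau\in E$ containing $\overline R_1\cup\overline R_2$ \emph{together with} the finitely many predicates occurring in $\Delta$; then your argument yields consistency of $(T\cap\cf(\tau))\cup\{\chi_1,\chi_2\}$, which contains $\Delta\cup\{\chi_1,\chi_2\}$. (The analogous step in your part~1 for $l=\mathbf{ab}$ is handled correctly, via the $\Delta_l$ and compactness.) With this adjustment the proof stands.
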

\begin{proof} \emph{1}.  Clear, using  compactness of $\cf$.

\emph{2}.  This is obvious for complete theories (case $l=\mathbf{c}$). Let us consider existentially closed theories (case $l=\mathbf{ec}$).  Fix, as above, a countable vocabulary $\tau_\infty$ disjoint of $\tau(T_2)$ and containing for each $n\geq 1$ infinitely many  $n$-ary predicates. Let $(\sigma_i: i<\mu)$ be an enumeration of all $\cf(\tau(T_0)\cup\tau_\infty)$-sentences  $\sigma_i=\psi(\overline{R},\overline{S})$ consistent with $T_0$  with $\overline{R}\subseteq \tau(T_0)$ and $\overline{S}\subseteq \tau_\infty$.  We inductively define a corresponding sequence $(\sigma^\prime_i: i<\mu)$ of sentences $\sigma^\prime_i\in T_2$.  If  $\sigma_i=\psi(\overline{R},\overline{S})$ is consistent with $T_2$, we use the fact that $T_2\in\Tb^\mathbf{ec}_{<\kappa}$  to find some tuple $\overline{S}^\prime\subseteq \tau(T_2)$  such that $\overline{S}^\prime\approx \overline{S}$ and $\psi(\overline{R},\overline{S}^\prime)\in T_2$,  and we put  $\sigma_i^\prime= \psi(\overline{R},\overline{S}^\prime)$.  If  $\sigma_i$ is not consistent with $T_2$  we choose as $\sigma_i^\prime$  some sentence in $T_2$ inconsistent with $\sigma_i$. Let $\tau_0$   be the vocabulary of $\{\sigma_i^\prime: i<\mu\}$  and let $\Sigma_0=T_2\restriction \tau_0$. Notice that $T_0\subseteq \Sigma_0$.  Notice that every $\cf$-sentence $\psi(\overline{R},\overline{S})$   with $\overline{R}\subseteq \tau(T_0)$ and $\overline{S}\subseteq \tau_\infty$ which  is consistent with $\Sigma_0$  is also consistent with $T_2$, and therefore  there is some $\overline{S}^\prime\subseteq \tau_0$ such that $\overline{S}^\prime\approx \overline{S}$ and $\psi(\overline{R},\overline{S}^\prime)\in \Sigma_0$.  Now we iterate this construction $\omega$ times, obtaining an ascending chain of vocabularies  $(\tau_i:i<\omega)$  with $\tau(T_0)\subseteq \tau_0$, $\tau_i\subseteq \tau(T_2)$  and $|\tau_i|= \mu$  together with corresponding theories $\Sigma_i=T_2\restriction \tau_i$  such that  for every $\cf$-sentence $\psi(\overline{R},\overline{S})$  consistent with $\Sigma_{i+1}$ and  with $\overline{R}\subseteq \tau_i$ and $\overline{S}\subseteq \tau_\infty$, there is some $\overline{S}^\prime\subseteq \tau_{i+1}$ such that $\overline{S}^\prime\approx \overline{S}$ and $\psi(\overline{R},\overline{S}^\prime)\in \Sigma_{i+1}$. Then $T_1=\bigcup_{i<\omega}\Sigma_i$  satisfies the requirements.

The case of an amalgamation  base ($l=\mathbf{ab}$) is similar. We extend $T_0$  to  $T_1\in\Tb^\mathbf{c}_{<\mu^+}$  such that $T_1\subseteq T_2$  and every sentence $\psi(\overline{R},\overline{S})$ consistent with $T_1$ with $\overline{R}\subseteq \tau(T_1)$ and $\overline{S}\subseteq \tau_\infty$  is  consistent with $T_2$,  and then one easily checks that $T_1\in\Tb^\mathbf{ab}_{<\mu^+}$.

\emph{3}  is clear and \emph{4} follows from \emph{1} and \emph{2}. 
\end{proof}

\begin{lemma}\label{amalg2} Let $T_0,T_1,T_2\in\Tb^\mathbf{ec}_{<\kappa}$ and assume $T_0\subseteq T_1$  and  the embedding $f:\tau(T_0)\rightarrow \tau(T_2)$ maps $T_0$ into $T_2$. Then there is some $T_3\in \Tb^\mathbf{ec}_{<\kappa}$ such that $T_2\subseteq T_3$ and there is some embedding $g:\tau(T_1)\rightarrow \tau(T_3)$ extending $f$ and mapping $T_1$ into $T_3$.
\end{lemma}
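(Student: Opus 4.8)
The plan is to produce $T_3$ by a single amalgamation (Lemma~\ref{amalg}(3)) followed by closing off under existential closedness (Lemma~\ref{amalg}(4)); note at the outset that we may assume $\kappa>\aleph_0$, since otherwise $\Tb^\mathbf{ec}_{<\kappa}=\emptyset$ and there is nothing to prove. First I would put $T_1$ and $T_2$ into amalgamable position: using the possibility of renaming, fix a vocabulary $\tau_1'\supseteq f(\tau(T_0))$ and an isomorphism of vocabularies $g_0\colon\tau(T_1)\to\tau_1'$ extending $f$ and sending $\tau(T_1)\smallsetminus\tau(T_0)$ bijectively onto a set of fresh predicate symbols disjoint from $\tau(T_2)$, so that $\tau_1'\cap\tau(T_2)=f(\tau(T_0))$. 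By Lemma~\ref{amalg}(5), $g_0(T_1)\in\Tb^\mathbf{ec}_{<\kappa}\subseteq\Tb_{<\kappa}$ and $f(T_0)\in\Tb^\mathbf{ec}_{<\kappa}\subseteq\Tb^\mathbf{ab}_{<\kappa}$ (the last inclusion by Lemma~\ref{amalg}(1)), with $\tau(f(T_0))=f(\tau(T_0))=\tau_1'\cap\tau(T_2)$; and since $g_0$ extends $f$ and $T_0\subseteq T_1$, we have $f(T_0)=g_0(T_0)\subseteq g_0(T_1)$, while $f(T_0)\subseteq T_2$ holds by hypothesis.

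Then I would apply Lemma~\ref{amalg}(3) with $\mu=2$ to the two theories $g_0(T_1)$ and $T_2$ over the common amalgamation base $f(T_0)$ — their vocabularies meet exactly in $\tau(f(T_0))$ and each contains $f(T_0)$ — obtaining that $g_0(T_1)\cup T_2$ is consistent. Its vocabulary $\tau_1'\cup\tau(T_2)$ has cardinality $<\kappa$ (a union of two vocabularies of cardinality $<\kappa$), so $g_0(T_1)\cup T_2\in\Tb_{<\kappa}$, and by Lemma~\ref{amalg}(4) it extends to some $T_3\in\Tb^\mathbf{ec}_{<\kappa}$. Taking $g:=g_0$ then completes the argument: $g$ is an embedding of $\tau(T_1)$ into $\tau_1'\subseteq\tau(T_3)$ extending $f$, with $g(T_1)=g_0(T_1)\subseteq T_3$, and $T_2\subseteq T_3$.

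The one place that needs care — the step I would single out as the crux — is the bookkeeping in the renaming: $g_0$ must extend the given $f$ literally while simultaneously making $\tau_1'\cap\tau(T_2)$ exactly $f(\tau(T_0))$, so that $f(T_0)$ is genuinely the common part and Lemma~\ref{amalg}(3) is applicable. (Alternatively one could unwind the definition of $\Tb^\mathbf{ab}_{<\kappa}$ directly, at the cost of also verifying $g_0(T_1)\cap T_2=f(T_0)$, which follows routinely from completeness of the theories involved together with $T_0\subseteq T_1$; routing through part~(3) of the lemma sidesteps this.)
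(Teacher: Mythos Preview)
Your argument is correct and follows essentially the same route as the paper: rename $T_1$ over $f$ so that its vocabulary meets $\tau(T_2)$ exactly in $f(\tau(T_0))$, observe that $f(T_0)\in\Tb^\mathbf{ab}_{<\kappa}$ via Lemma~\ref{amalg}(5) and~(1), amalgamate, and then close off using Lemma~\ref{amalg}(4). The only cosmetic difference is that the paper invokes the definition of $\Tb^\mathbf{ab}_{<\kappa}$ directly rather than Lemma~\ref{amalg}(3); as you note in your final parenthetical, your route through part~(3) spares you the (easy, completeness-based) check that $g_0(T_1)\cap T_2=f(T_0)$, which the paper leaves implicit.
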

\begin{proof} Extend $f$ to an embedding $g$ with domain $\tau(T_1)$ and such that  $g(\tau(T_1))\cap \tau(T_2)= f(\tau(T_0))$,  define $T^\prime_0$ and $T_1^\prime$ as the images  of $T_0$ and $T_1$ by $g$ respectively,  and apply  items \emph{5} and \emph{1} of Lemma~\ref{amalg}  to   $T_0$    to show that $T^\prime_0\in\Tb^\mathbf{ab}_{<\kappa}$.  Now, obviously,  $T_0^\prime\subseteq T_1^\prime\in \Tb_{<\kappa}$,  $T_0^\prime\subseteq T_2\in \Tb_{<\kappa}$ and hence, by definition of $T_0^\prime\in\Tb^\mathbf{ab}_{<\kappa}$,
    $T^\prime_1\cup T_2$ is consistent. By item \emph{4}  of Lemma~\ref{amalg}, it can be extended to some $T_3\in\Tb^\mathbf{ec}_{<\kappa}$.  Clearly, $g$ maps $T_1$ into $T_3$.
\end{proof}

\begin{defi}  Assume $T_{\ast}\in \Tb_{<\kappa}$, $T\in\Tb^\mathbf{ec}_{<\kappa^+}$, and $T_{\ast}\subseteq T$.    We call  the theory $T$  \emph{$\kappa$-universal over $T_{\ast} $}  if   for every $T^\prime\in\Tb^\mathbf{ec}_{<\kappa^+}$ such that $T_{\ast}\subseteq T^\prime$,  there is some embedding $f:\tau(T^\prime)\rightarrow \tau(T)$ over $\tau_{\ast}=\tau(T_{\ast})$ mapping $T^\prime$ into $T$. 
\end{defi}

\begin{theorem}\label{universal} Assume $T_{\ast}\in\Tb^\mathbf{ab}_{<\kappa}$, i.e., it is an amalgamation base.  If $\kappa= 2^{<\kappa}>\aleph_0$, then there exists some   $\kappa$-universal theory  $T\in\Tb^\mathbf{ec}_{<\kappa^+}$ over $T_{\ast}$.
\end{theorem}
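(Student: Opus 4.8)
The plan is to build $T$ by a transfinite union of length $\kappa^+$ of members of $\Tb^{\mathbf{ec}}_{<\kappa^+}$, using Lemma~\ref{amalg2} at successor stages to absorb, one at a time, every consistent extension of $T_\ast$ of size $\leq\kappa$, and using Lemma~\ref{lemma}(1) at limit stages to stay inside $\Tb^{\mathbf{ec}}_{<\kappa^+}$. First I would set $\tau_\ast=\tau(T_\ast)$ and, by Lemma~\ref{amalg}(4) (legitimate since $\kappa^+>\aleph_0$), extend $T_\ast$ to some $T^0\in\Tb^{\mathbf{ec}}_{<\kappa^+}$; note $T_\ast\in\Tb^{\mathbf{ab}}_{<\kappa}\subseteq\Tb^{\mathbf{ab}}_{<\kappa^+}$ by Lemma~\ref{lemma}(3), so $T_\ast$ is an amalgamation base at level $\kappa^+$ as well, which is what Lemma~\ref{amalg2} needs. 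Along the construction I keep the vocabularies nested and of size $\leq\kappa$, so that at each stage $\alpha<\kappa^+$ the partial union $T^\alpha$ has $|\tau(T^\alpha)|\leq\kappa<\kappa^+$.

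The bookkeeping is where the hypothesis $\kappa=2^{<\kappa}$ is consumed. The point is to enumerate, with a single list of length $\kappa^+$, \emph{all} pairs $(T',f)$ where $T'\in\Tb^{\mathbf{ec}}_{<\kappa^+}$ extends $T_\ast$ and $f:\tau_\ast\to\tau(T')$ is to be amalgamated — but since there are too many such $T'$ in absolute terms, one instead does the standard trick: by Lemma~\ref{lemma}(2) every $T'\in\Tb^{\mathbf{ec}}_{<\kappa^+}$ extending $T_\ast$ contains, cofinally in an $\omega$-chain, subtheories in $\Tb^{\mathbf{ec}}_{<\kappa}$ extending $T_\ast$, and $T'$ itself is the union of such. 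So it suffices to absorb every $T''\in\Tb^{\mathbf{ec}}_{<\kappa}$ with $T_\ast\subseteq T''$; and the number of such $T''$, up to renaming the vocabulary outside $\tau_\ast$, is at most $2^{<\kappa}=\kappa$: a theory with vocabulary of size $<\kappa$ is a subset of a set of sentences of size $<\kappa$, hence there are $\leq 2^{<\kappa}$ of them for each fixed vocabulary, and $\leq\kappa$ choices of vocabulary of size $<\kappa$ containing $\tau_\ast$ once we work up to isomorphism over $\tau_\ast$ (here Lemma~\ref{amalg}(5) lets us replace $T''$ by an isomorphic copy without loss). Fix a book $(T^\alpha_{\mathrm{task}}:\alpha<\kappa^+)$ listing representatives of all these isomorphism types, each appearing cofinally often if desired — though appearing once suffices here.

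At stage $\alpha+1$: we are given $T^\alpha\in\Tb^{\mathbf{ec}}_{<\kappa^+}$ with $T_\ast\subseteq T^\alpha$ and a task $T'' = T^\alpha_{\mathrm{task}}\in\Tb^{\mathbf{ec}}_{<\kappa}$ with $T_\ast\subseteq T''$, together with the inclusion $f=\mathrm{id}:\tau_\ast\hookrightarrow\tau(T^\alpha)$. We want to amalgamate $T''$ over $T_\ast$ into an extension of $T^\alpha$. Apply Lemma~\ref{amalg2} with the roles $T_0:=T_\ast$, $T_1:=T''$, $T_2:=T^\alpha$, and the embedding $f:\tau(T_0)\to\tau(T_2)$ being the inclusion (which maps $T_0$ into $T_2$): since $T_0,T_1\in\Tb^{\mathbf{ec}}_{<\kappa}\subseteq\Tb^{\mathbf{ec}}_{<\kappa^+}$ and $T_2\in\Tb^{\mathbf{ec}}_{<\kappa^+}$, we get $T^{\alpha+1}:=T_3\in\Tb^{\mathbf{ec}}_{<\kappa^+}$ with $T^\alpha\subseteq T^{\alpha+1}$ and an embedding $g:\tau(T'')\to\tau(T^{\alpha+1})$ over $\tau_\ast$ sending $T''$ into $T^{\alpha+1}$. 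At limit $\delta<\kappa^+$ put $T^\delta=\bigcup_{\alpha<\delta}T^\alpha$; since the vocabulary has size $\leq|\delta|+|\tau_\ast|<\kappa^+$, Lemma~\ref{lemma}(1) gives $T^\delta\in\Tb^{\mathbf{ec}}_{<\kappa^+}$. Finally set $T=\bigcup_{\alpha<\kappa^+}T^\alpha$; its vocabulary has size $\leq\kappa^+$, but in fact we arrange $|\tau(T)|\leq\kappa^+$ is fine and one checks $|\tau(T)|<\kappa^{++}$, and $\mathrm{cf}(\kappa^+)=\kappa^+>\aleph_0$, so another application of Lemma~\ref{lemma}(1) — after noting each cofinal-in-$\kappa^+$ increasing subchain has union of vocabulary-size $\leq\kappa^+<\kappa^{++}$ — yields $T\in\Tb^{\mathbf{ec}}_{<\kappa^{++}}$; but we actually need $T\in\Tb^{\mathbf{ec}}_{<\kappa^+}$, which forces a more careful count: arrange the task vocabularies so that $\bigcup_\alpha\tau(T^\alpha)$ has size $\kappa$, not $\kappa^+$ — possible since at each step we add $<\kappa$ new symbols and there are $\kappa^+$ steps, giving $\kappa^+$ in general, so instead reuse a fixed reservoir of $\kappa$ spare symbols (as in the proof of Lemma~\ref{amalg}(4), choosing $g$'s range inside a fixed set of size $\kappa$), whence $|\tau(T)|=\kappa<\kappa^+$ and $T\in\Tb^{\mathbf{ec}}_{<\kappa^+}$.

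For $\kappa$-universality: let $T'\in\Tb^{\mathbf{ec}}_{<\kappa^+}$ with $T_\ast\subseteq T'$. By Lemma~\ref{lemma}(2) write $T'$ as the union of an increasing $\omega$-chain — or more precisely a directed system — of subtheories $T'_j\in\Tb^{\mathbf{ec}}_{<\kappa}$ each containing $T_\ast$ (using $|T_\ast|\leq\kappa$... actually $|\tau_\ast|<\kappa$ suffices to apply part (2) repeatedly to exhaust $T'$). Each $T'_j$ is isomorphic over $\tau_\ast$ to some task $T^{\alpha_j}_{\mathrm{task}}$, so was amalgamated into $T$ at stage $\alpha_j+1$ via an embedding over $\tau_\ast$. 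The main obstacle is coherence: these embeddings of the $T'_j$ into $T$ need to be glued into a single embedding of $T'$, which the one-shot bookkeeping above does not guarantee. The fix — the genuinely delicate point, and the one I expect to be hardest — is to iterate the amalgamation \emph{along the chain}: book the tasks as pairs (a countable approximation $T'_j$ of some $T'$, together with a partial embedding already constructed of its predecessor $T'_{j-1}$ into $T^{\alpha}$), apply Lemma~\ref{amalg2} to \emph{extend} that partial embedding to one of $T'_j$, and use a $\diamondsuit$-style or simply a $\kappa^+$-length bookkeeping with each task appearing $\kappa^+$-cofinally so that for each $T'$ a whole compatible tower of embeddings of its approximations gets built, whose union embeds $T'$ over $\tau_\ast$ into $T$. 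Closing $\tau(T)$ under the requirement $|\tau(T)|<\kappa^+$ throughout, this delivers the claimed $\kappa$-universal $T\in\Tb^{\mathbf{ec}}_{<\kappa^+}$ over $T_\ast$.
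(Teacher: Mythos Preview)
Your proposal identifies the right ingredients (amalgamation via Lemma~\ref{amalg2}, bookkeeping over representatives in $\Tb^{\mathbf{ec}}_{<\kappa}$, the use of $2^{<\kappa}=\kappa$ to count them), and you correctly spot both obstacles: the vocabulary-size bound and the coherence of the partial embeddings. But your fixes for these two obstacles are in direct conflict, and this is a genuine gap.

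A chain of length $\kappa^+$ cannot produce a vocabulary of size $\leq\kappa$: even if you ``reuse a fixed reservoir of $\kappa$ spare symbols,'' the vocabularies $\tau(T^\alpha)$ are strictly increasing (each amalgamation that is not already satisfied must add at least one new symbol), so once $\alpha\geq\kappa$ the reservoir is exhausted and you can no longer embed fresh copies disjointly from $\tau(T^\alpha)$. On the other hand, your coherence fix (book pairs $(\text{approximation},\text{partial embedding})$ appearing $\kappa^+$-cofinally) \emph{requires} length $\kappa^+$. So as written the construction does not terminate inside $\Tb^{\mathbf{ec}}_{<\kappa^+}$.

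The paper resolves both issues simultaneously by running the construction at length $\kappa$ (for $\kappa$ regular), not $\kappa^+$, and by handling at stage $i+1$ \emph{all} triples $(S_j,S_l,f)$ with $j,l<i$, $S_j\subseteq S_l$, and $f$ an embedding of $\tau(S_j)$ into $\tau(T_i)$ mapping $S_j$ into $T_i$ --- not just a single task. The count $2^{<\kappa}=\kappa$ is used to bound the number of such triples by $\kappa$, so the stage-$(i{+}1)$ work is itself an iteration of length $\leq\kappa$ and keeps $|\tau(T_{i+1})|\leq\kappa$. This yields an extension property (the paper's Claim~1): any embedding of some $S\in\Tb^{\mathbf{ec}}_{<\kappa}$ into $T$ extends to an embedding of any $S'\supseteq S$ in $\Tb^{\mathbf{ec}}_{<\kappa}$. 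Universality then follows by decomposing $T'\in\Tb^{\mathbf{ec}}_{<\kappa^+}$ as a continuous chain $(T'_i:i<\kappa)$ with $T'_i\in\Tb^{\mathbf{ec}}_{<\kappa}$ (not an $\omega$-chain --- your sketch is imprecise here; one needs length with cofinality $\kappa$) and climbing this chain using the extension property. Finally, the singular case requires a separate argument (a chain of length $\mathrm{cf}(\kappa)$ along a cofinal sequence of cardinals, using that $\kappa$ is strong limit), which your proposal does not address.
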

\begin{proof} \emph{Case 1}.  $\kappa$ is regular.

Let $\tau_\infty$ be some vocabulary such that $\tau_{\ast}\subseteq \tau_{\infty}$,  $|\tau_\infty|=\kappa$  and for every $n\geq 1$, $\tau_\infty$ has $\kappa$  many $n$-ary predicates.  Let  $(S_i: i<\kappa)$ be a enumeration of  all theories in $\Tb^\mathbf{ec}_{<\kappa}$  whose vocabulary is a subset of $\tau_\infty$.
Notice that every theory in $\Tb^\mathbf{ec}_{<\kappa}$ is  a renaming  of some $S_i$.

 We are going to construct a continuous ascending chain of $\cf$-theories $(T_i:  i<\kappa)$ extending $T_\ast$ such that for every $i<\kappa$:
\begin{enumerate}
\item $T_i\in\Tb^\mathbf{ec}_{<\kappa^+}$
\item If  $j,l<i$, $S_j\subseteq S_l$ and $f$ is an embedding  of $:\tau(S_j)$ into $\tau(T_i)$ mapping $S_j$ into $T_i$, then  $f$ can be extended to an embedding $g: \tau(S_l)\rightarrow \tau(T_{i+1})$  mapping $S_l$ into $T_{i+1}$.
\end{enumerate} 
  We start  by choosing some  arbitrary initial  theory  $T_0\in\Tb^\mathbf{ec}_{<\kappa}$ with  $T_{\ast}\subseteq T_0$ and  $\tau_0=\tau(T_0)\subseteq \tau_\infty$.  By  Lemma~\ref{lemma}, we may take unions at limit stages. Now we show how to obtain $T_{i+1}$.  Let us consider a particular case of  $j,l<i$,  such that $S_j\subseteq S_l$ and $f:\tau(S_j)\rightarrow \tau(T_i)$, an  embedding mapping  $S_j$ into $T_i$.  By Lemma~\ref{amalg2},  there is some $T_{j,l,f}\in\Tb^\mathbf{ec}_{<\kappa^+}$ extending $T_i$ and some embedding $g:\tau(S_l)\rightarrow \tau(T_{j,l,f})$  that maps  $S_l$ into $T_{j,l,f}$ and extends $f$.  The number of the possible triples $(j,l,f)$ is $\leq \kappa$ and hence, by iteration and taking unions at limits, we obtain $T_{i+1}\in \Tb^\mathbf{ec}_{<\kappa^+}$ as desired.

Now  let $T=\bigcup_{i<\kappa}T_i$.   By Lemma~\ref{lemma},  $T\in\Tb^\mathbf{ec}_{<\kappa^+}$.  

\emph{Claim 1}.  If  $S,S^\prime\in\Tb^\mathbf{ec}_{<\kappa}$, $S\subseteq S^\prime$ and $f:\tau(S)\rightarrow \tau(T)$ is an embedding mapping $S$ into $T$, then there is an embedding $f^\prime:\tau(S^\prime)\rightarrow \tau(T)$  extending $f$ that maps $S^\prime$ into $T$.

\emph{Proof of  Claim 1}. Since for some $l<\kappa$ there is some isomorphism  between $\tau(S_l)$ and $\tau(S^\prime)$  mapping $S_l$ onto $S^\prime$, we may assume that  $S^\prime= S_l$ and $S= S_j$  for some $j,l<\kappa$. Now choose $i<\kappa$ such that $i>j,l$  and   the range of $f$ is contained in $\tau(T_i)$  and notice that $f$ maps $S_j$ into $T_i$.   By construction of $T_{i+1}$, there is some extension $f^\prime: \tau(S_l)\rightarrow \tau(T_{i+1})$  of $f$  mapping  $S_l$ into $T_{i+1}$ and hence into $T$. This proves the claim.

We show now that $T$  is  $\kappa$-universal  over $T_{\ast}$.   Let  $T^\prime\in\Tb^\mathbf{ec}_{<\kappa^+}$  be such that $T_{\ast}\subseteq T^\prime$ and decompose it (using Lemma~\ref{lemma}) as $T^\prime=\bigcup_{i<\kappa}T^\prime_i$  where $(T^\prime_i : i<\kappa)$  is a continuous ascending chain of theories $T^\prime_i\in\Tb^\mathbf{ec}_{<\kappa}$. Since $T_\ast\in\Tb^\mathbf{ab}_{<\kappa}$, we may assume that $T_0\subseteq T^\prime$, and hence, without loss of generality, $T^\prime_0 = T_0$. We claim that there is a continuous ascending chain $(f_i:i<\kappa)$ of embeddings $f_i:\tau(T^\prime_i)\rightarrow \tau(T)$ over $\tau_\ast$   mapping $T^\prime_i$ into $T$.  Notice that in this case  $f=\bigcup_{i<\kappa}f_i$ will be an embedding of $\tau(T^\prime)$ into $\tau(T)$ over $\tau_\ast$ mapping $T^\prime$ into $T$.  We start taking  $f_0$ as the identity in $\tau(T^\prime_0)$ and we take unions at limit stages.   If $f_i:\tau(T_i^\prime)\rightarrow \tau(T)$ has been obtained, then  by  Claim 1 we can extend $f_i$ to $f_{i+1}$  mapping $T^\prime_{i+1}$ into $T$. 

\emph{Case 2}.  $\kappa$ is singular.

  By K\"onig's Theorem on cofinality, $\kappa$ is a strong limit.  Let $\theta=  \mathrm{cf}(\kappa)$  and choose $(\kappa_i: i<\theta)$, an increasing  sequence of cardinal numbers which is cofinal in $\kappa$ and such that  $|\tau_{\ast}|<\kappa_0^+$ and  $2^{\kappa_i}\leq \kappa_{i+1}$  for all $i<\theta$.  For each $i<\theta$ let  $\tau_{i,\infty}$ be some vocabulary such that $\tau_{\ast}\subseteq \tau_{i,\infty}$, $|\tau_{i,\infty}|=\kappa_i$  and for every $n\geq 1$, $\tau_{i,\infty}$ has $\kappa_i$  many $n$-ary predicates.     Let  $(S_{i,j}: j<2^{\kappa_i})$ be a enumeration of  all theories in $\Tb^\mathbf{ec}_{<\kappa_i^+}$  whose vocabulary is a subset of $\tau_{i,\infty}$.
Every theory in $\Tb^\mathbf{ec}_{<\kappa_i^+}$ is  a renaming of some $S_{i,j}$.

 We inductively  construct a continuous ascending chain of $\cf$-theories $(T_i:  i<\theta)$ such that for every $i<\theta$:
\begin{enumerate}
\item $ T_i\in\Tb^\mathbf{ec}_{<\kappa_i^+}$
\item If  $j,l<2^{\kappa_i}$,   $S_{i,j}\subseteq S_{i,l}$,  and  $f:\tau(S_{i,j})\rightarrow \tau(T_i)$ is an embedding mapping $S_{i,j}$ into $T_i$, then  $f$ can be extended to an embedding $g: \tau(S_{i,l})\rightarrow \tau(T_{i+1})$  mapping $S_{i,l}$ into $T_{i+1}$.
\end{enumerate}   We choose $T_0\in\Tb^\mathbf{ec}_{<\kappa_0^+}$ arbitrary  with $T_{\ast}\subseteq T_0$ and $\tau_0=\tau(T_0)\subseteq \tau_{0,\infty}$  and we  take unions at limit stages.   This is possible by Lemma~\ref{lemma}, since for any limit ordinal $\delta<\theta$,  $|\bigcup_{i<\delta}\tau(T_i)|\leq \kappa_\delta$. In order to obtain  $T_{i+1}$, we consider  a particular case of $j,l<2^{\kappa_i}$ such that   $S_{i,j}\subseteq S_{i,l}$  and there is some embedding $f:\tau(S_{i,j})\rightarrow \tau(T_i)$ mapping $S_{i,j}$ into $T_i$.  By Lemma~\ref{amalg2},  there is some $T_{j,l,f}\in\Tb^\mathbf{ec}_{<\kappa}$ extending $T_i$ and some embedding $g:\tau(S_{i,l})\rightarrow \tau(T_{j,l,f})$ that extends $f$ and maps  $S_{i,l}$ into $T_{j,l,f}$.  By Lemma~\ref{lemma} we may find such $T_{j,l,f}$ with the additional property that $|\tau(T_{j,l,f})|<\kappa_i^+$.   Note that the number of possible embeddings $f$  from $\tau(S_{i,j})$ into $ \tau(T_i)$ is  $\leq  \kappa_i^{\kappa_i}\leq \kappa_{i+1}$ and therefore the number of triples $(j,l,f)$   is $\leq \kappa_{i+1}$. We may assume that $\tau(T_{j,l,f})\cap \tau(T_{j^\prime, l^\prime,f^\prime})=\tau(T_i)$ whenever $(j,l,f)\neq (j^\prime,l^\prime,f^\prime)$.  By item 3 of Lemma~\ref{amalg}, we see that the union of all $T_{j,l,f}$ is consistent.  Since it has cardinality $\leq \kappa_{i+1}$, by item 4 of Lemma~\ref{amalg} this union can be extended to  $T_{i+1}\in \Tb^\mathbf{ec}_{<\kappa_{i+1}^+}$ as required. 
  
  Now  let $T=\bigcup_{i<\theta}T_i$.   It is clear that  $T\in\Tb^\mathbf{ec}_{<\kappa^+}$. 
  
  \emph{Claim 2}.  If  $S,S^\prime\in\Tb^\mathbf{ec}_{<\kappa_i}$,   $S\subseteq S^\prime$ and $f:\tau(S)\rightarrow \tau(T_i)$ is an embedding mapping $S$ into $T_i$, then there is an embedding $f^\prime:\tau(S^\prime)\rightarrow \tau(T_{i+1})$  extending $f$ that maps $S^\prime$ into $T_{i+1}$.
  
  \emph{Proof of Claim 2}.  Like in the proof of Claim 1, we may assume that  $S^\prime= S_{i,l}$ and $S= S_{i,j}$  for some $j,l<2^{\kappa_i}$.

We finally check  that also in this case  $T$  is $\kappa$-universal over  $T_{\ast}$.  Let  $T^\prime\in\Tb^\mathbf{ec}_{<\kappa^+}$ and decompose it (using Lemma~\ref{lemma}) as $T^\prime=\bigcup_{i<\theta}T^\prime_i$  where $(T^\prime_i : i<\theta)$  is a continuous ascending chain of theories $T^\prime_i\in\Tb^\mathbf{ec}_{<\kappa_i^+}$.  As before, we may assume that $T_0\subseteq T^\prime$ and hence that $T^\prime_0=T_0$. We claim that there is a continuous ascending chain $(f_i:i<\theta)$ of embeddings $f_i:\tau(T^\prime_i)\rightarrow \tau(T_{i+1})$ over $\tau_0$   mapping $T^\prime_i$ into $T_{i+1}$.  Notice that in this case  $f=\bigcup_{i<\theta}f_i$ will be an embedding of $\tau(T^\prime)$ into $\tau(T)$ over $\tau_\ast$ mapping $T^\prime$ into $T$.  We start by taking  $f_0$ as the identity in $\tau_0$ and we take unions at limit stages.   If $f_i:\tau(T_i^\prime)\rightarrow \tau(T_{i+1})$ has been obtained, we use Claim 2 (applied to $i+1$)  to extend $f_i$ to $f_{i+1}$  mapping $T^\prime_{i+1}$ into $T_{i+2}$. 
\end{proof}
 
The  proof of Theorem~\ref{universal} and the preceeding lemmas use only a few properties of the logic $\cf$. The reader can easily check that, besides compactness and  the possibility of building negations and conjunctions of sentences, we only need to rename symbols  and to  give small upper bounds to the number of sentences in given vocabularies. We are only interested here in applications of the logic $\cf$, but it may be convenient to state the main results of this section in the more general setting of abstract model theory. This is done in the next observation, whose proof is left to the reader.

\begin{remark}\label{amt}  Let $\mathcal{L}$ be a compact logic, in the sense of abstract model theory. Assume $\mathcal{L}$ admits renaming,  is closed under boolean operators,  in every sentence only finitely many symbols occur,  and in every finite vocabulary there are only countably many  (or $\leq \theta$) sentences. Define the classes  $\Tb^{l}_{<\kappa}$ and the notion of $\kappa$-universal theory in an analogous way for $\mathcal{L}$.  If $T_{\ast}\in\Tb^\mathbf{ab}_{<\kappa}$ and $\kappa= 2^{<\kappa}>\aleph_0$  (or $>\theta$), then there exists some   $\kappa$-universal theory  $T\in\Tb^\mathbf{ec}_{<\kappa^+}$ over $T_{\ast}$. And similarly the other claims of this section.
\end{remark}

\section{Compact expandability}

The following definition and the  facts stated subsequently are given for models of countable vocabularies. This is only for simplification purposes, everything can be formulated with full generality with a few modifications.

\begin{defi} Let  $M$ be a model of countable vocabulary $\tau$ and of cardinality $\kappa$.
\begin{enumerate}
\item $M$ is \emph{expandable} if for every vocabulary $\tau^\prime\supseteq \tau$ of cardinality $\leq \kappa$,   if $\Sigma$ is a  first-order set of sentences of vocabulary $\tau^\prime$  consistent with the first-order theory $\mathrm{Th}(M)$ of $M$, then there is some expansion $M^\prime$ of $M$ to $\tau^\prime$ such that $M^\prime\models \Sigma$.
\item Call a set of first-order sentences $\Sigma $  of vocabulary $\tau^\prime\supseteq \tau$   \emph{finitely satisfiable in $M$} if for every finite subset $\Sigma_0\subseteq \Sigma$ there is an expansion of $M$ that satisfies $\Sigma_0$.
\item $M$  is \emph{compactly expandable} if for every vocabulary $\tau^\prime\supseteq \tau$ of cardinality $\leq \kappa$,   if $\Sigma$ is a  first-order set of sentences of vocabulary $\tau^\prime$  finitely satisfiable in  $M$, then there is some expansion $M^\prime$ of $M$ to $\tau^\prime$ such that $M^\prime\models \Sigma$.
\end{enumerate}
\end{defi}

The motivation for studying compactly expandable models came originally from the interest in restricted forms of the compactness theorem for logics with standard part.  These logics were considered by M.~Morley in~\cite{Mor73} as generalizations of $\omega$-logic. In $\omega$-logic  some notions concerning the natural numbers remain fixed, in $M$-logic  the structure $M$ replaces the structure of natural numbers. If $M$ is a model of cardinality $\kappa$, for trivial reasons the $M$-logic  does not  satisfy  the compactness theorem for sets of sentences of cardinality larger than  $\kappa$.   One can prove that the model $M$ is compactly expandable if and only if $M$-logic is $\kappa$-compact, that is, if it satisfies the compactness theorem for sets of sentences of cardinality at most $\kappa$. For more on this see~\cite{Cas95} and~\cite{Cas96}.

For a proof of the following list of facts, see~\cite{Cas95}.

\begin{fact}
\begin{enumerate}
\item Saturated and special models are expandable.
\item Expandable models are compactly expandable.
\item Compactly expandable models are $\omega$-saturated and universal.
\item The countable compactly expandable  models are the countable saturated models.
\item If $T$ is superstable and does not have the finite cover property, then every compactly expandable model of $T$ of cardinality $\geq 2^{\aleph_0}$  is saturated.
\item If $T$ is $\aleph_0$-stable and does not have the finite cover property, then every compactly expandable model of $T$ is saturated.
\item If $T$ is superstable and has the finite cover property, $T$ has compactly expandable models which are not saturated.
\item Every unsuperstable  theory  having a saturated model of cardinality $\kappa>\aleph_0$, has a compactly expandable model of cardinality $\kappa$ which is not saturated. 
\end{enumerate}
\end{fact}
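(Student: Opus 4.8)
The plan is to reduce every item to a single technique: code a model-theoretic demand as a first-order sentence set $\Sigma$ in an expanded vocabulary $\tau^\prime\supseteq\tau$ with $|\tau^\prime|\le\kappa$, and then check \emph{finite satisfiability in $M$}, after which compact expandability delivers the expansion. Item (2) is immediate from this viewpoint: if $\Sigma$ (over $\tau^\prime$, $|\tau^\prime|\le\kappa$) is finitely satisfiable in $M$, then each finite $\Sigma_0\subseteq\Sigma$ has an expansion of $M$ as a model, so in particular $\Th(M)\cup\Sigma_0$ is consistent; by ordinary compactness $\Th(M)\cup\Sigma$ is consistent, and expandability produces the desired expansion. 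This matches the characterization recalled above, that $M$ is compactly expandable exactly when $M$-logic is $\kappa$-compact.

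For item (1) I would use resplendency. Saturated models are resplendent (they realize every consistent type in a new predicate over finitely many parameters), and I would upgrade resplendency to expandability by enumerating $\tau^\prime\smallsetminus\tau$ and the sentences of $\Sigma$ and interpreting the new symbols along a transfinite chain, using the saturation (hence homogeneity and universality) of $M$ of its own cardinality to keep the partial expansions consistent with $\Th(M)\cup\Sigma$; since $|\tau^\prime|\le\kappa=|M|$ the construction closes off. For special $M$, written as the union of an elementary chain of saturated models of increasing cardinalities, I would expand along the chain, handling each countably generated fragment of $\Sigma$ inside a saturated link.

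Items (3) and (4) are again coding arguments. For \emph{universality}, given $N\models\Th(M)$ with $|N|\le\kappa$, take $\Sigma$ to be the elementary diagram of $N$ in the vocabulary obtained by adding one constant per element of $N$ (at most $\kappa$ new symbols); since $N\equiv M$, every finite fragment of the diagram has its existential closure true in $M$, so $\Sigma$ is finitely satisfiable in $M$, and compact expandability yields an elementary embedding $N\hookrightarrow M$. For \emph{$\omega$-saturation}, given a type $p(x)$ over a finite $\bar a$, add constants $\bar c,d$ and let $\Sigma$ assert that $\bar c$ realizes $\Th(M,\bar a)$ and that $d$ realizes the copy of $p$ over $\bar c$; finite satisfiability in $M$ follows by interpreting $\bar c:=\bar a$ and choosing a witness for the finite part of $p$, which exists because $p$ is consistent with the elementary diagram of $(M,\bar a)$. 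Compact expandability then realizes $p$ over some conjugate $\bar c^M\equiv\bar a$; a parallel coding of partial elementary maps yields the homogeneity needed to transfer the realization to $\bar a$ itself. Item (4) is formal: a countable saturated model is saturated, hence expandable (1) and compactly expandable (2); conversely a countable compactly expandable model is $\omega$-saturated by (3), and for countable models $\omega$-saturation is saturation.

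The real content is in (5)--(8), and the \textbf{main obstacle} is to convert the weak consequence ``$\omega$-saturated'' into genuine saturation, or to build compactly expandable models that deliberately fail it. For (5) and (6) I would start from the $\omega$-saturation given by (3) and invoke the Keisler--Shelah characterizations of the negation of the finite cover property, which say that without the fcp local realization upgrades uniformly; combined with the type-counting bounds for superstable ($|S(A)|\le|A|+2^{\aleph_0}$) and $\aleph_0$-stable ($|S(A)|\le|A|+\aleph_0$) theories, this pushes $\omega$-saturation up to full saturation, at cardinality $\ge 2^{\aleph_0}$ in the superstable case and in every cardinality in the $\aleph_0$-stable case. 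For (7) and (8) I would instead construct witnesses: for (7) the finite cover property supplies a formula whose $n$-consistency does not force consistency, which I would use to make an $\omega$-saturated, compactly expandable model (so that $M$-logic stays $\kappa$-compact) that nonetheless omits a type at its own cardinality; for (8) I would exploit unsuperstability, using an infinite forking chain of types to build a model of cardinality $\kappa$ that is compactly expandable but not $\kappa$-saturated, e.g.\ as a carefully chosen union of a chain of saturated models preserving compactness of $M$-logic while avoiding full saturation. Verifying compact expandability (equivalently, $\kappa$-compactness of $M$-logic) for these constructed models is the most delicate point, since it concerns arbitrary finitely-satisfiable sentence sets rather than merely types.
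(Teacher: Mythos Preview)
The paper does not prove this statement at all: it is presented as a list of background facts, with the single line ``For a proof of the following list of facts, see~\cite{Cas95}'' standing in for any argument. There is therefore no proof in the paper to compare your proposal against; the intended comparison collapses to ``the paper cites the literature, you attempt sketches.''

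On the sketches themselves: items~(2) and~(4) are fine, and the idea for~(1) via resplendency is standard. Your argument for universality in~(3) is correct, but your treatment of $\omega$-saturation has a genuine gap. Compact expandability only promises \emph{some} expansion of $M$ satisfying $\Sigma$; it does not let you prescribe the interpretation of the new constants. So after expanding you only know that $p$ is realized over some $\bar c^{M}\equiv\bar a$, and your proposed fix---``a parallel coding of partial elementary maps yields the homogeneity needed''---runs into exactly the same obstacle: to prove $\omega$-homogeneity by coding, you again need to pin the constants to the given tuple, which compact expandability does not grant. The result is true, but the passage from ``every finitary type is realized over \emph{some} conjugate of its base'' to genuine $\omega$-saturation needs an additional idea, which your sketch does not supply. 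For items~(5)--(8) your outlines are at the level of naming the relevant stability-theoretic ingredients rather than giving arguments; in particular, verifying that the constructed models in~(7) and~(8) remain compactly expandable is, as you note yourself, the crux, and nothing in the proposal addresses it. Since the paper simply defers all of this to \cite{Cas95}, that reference is where the actual proofs live.
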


The methods used in~\cite{Cas95}  to obtain compactly expandable models which are not saturated nor special are based on ultrapowers and chains of ultrapowers.  They always provide expandable models. In~\cite{Cas96} several examples of theories are discussed where every compactly expandable model is expandable. The question of whether in some theory there exists a compactly expandable model which is not expandable  was asked in~\cite{Cas95} in general and also particularly for the theory of linear dense orders without endpoints. We will give now an affirmative answer.

We are going to apply the results of the previous section to the particular case of the vocabulary $\tau_<=\{<\}$  and to some complete  $\cf$-theory $T_<$  in this vocabulary. We only require from  $T_<$   to extend the first-order theory DLO  of the dense linear order without endpoints and to  contain  the $\cf$-sentences expressing that  $<$ has cofinality $\aleph_0$  while  $>$ has cofinality larger than $\aleph_0$  and for every point $a$,  both $(\{b: b<a\},<)$ and $(\{b: b>a\},>)$ have cofinality larger than $\aleph_0$. More precisely:

\begin{defi}  \label{T<}Let  $\tau_<=\{<\}$  and let 
 $T_<$ be  the  $\cf(\tau_<)$-theory with    the following axioms:
\begin{enumerate}
\item $\forall xyz(x<y\wedge y<z\rightarrow x<z)\wedge \forall x\,(\neg   x<x)\wedge \forall xy( x<y \vee y<x \vee x=y)$
\item $\forall xy( x<y\rightarrow \exists z( x< z \wedge z<y))\wedge \forall x\exists yz( y<x \wedge x<z)$
\item $Q^{\mathrm{cf}}_{\aleph_0} xy\,  (x<y)   \wedge  \neg Q^{\mathrm{cf}}_{\aleph_0}yx\,( x<y)$
\item $\forall x(\neg Q^{\mathrm{cf}}_{\aleph_0}zy (x<y\wedge y<z)\wedge \neg Q^{\mathrm{cf}}_{\aleph_0}yz (y<z \wedge z<x))$
\end{enumerate} 
\end{defi}

\begin{remark} $T_<$  is consistent and complete.
\end{remark}
\begin{proof}  Let $(A,<_A)$ be an $\aleph_1$-saturated dense linear ordering without endpoints and consider the  lexicographic order of the product $ (\omega,<) \times (A,<_A)$.  All the axioms hold in this ordering.   For completeness, use the quantifier elimination of the first-order theory of linear dense orders without endpoints.
\end{proof}

\begin{remark} \label{ast}There is a countable $T_{\ast}\supseteq T_<$  with vocabulary $\tau_{\ast}=\tau(T_{\ast})\supseteq \tau_<$ such that $T_{\ast}\in \Tb^\mathbf{ec}_{<\aleph_1}$; in particular,  $T_{\ast}$ is an amalgamation base.
\end{remark}
\begin{proof} $T_<\in \Tb_{<\aleph_1}$,   and by   items \emph{4} and \emph{1}  of Lemma~\ref{amalg},  it  can be extended to some $T_{\ast}\in\Tb^\mathbf{ec}_{<\aleph_1}\subseteq \Tb^\mathbf{ab}_{<\aleph_1}$.
\end{proof}

\begin{prop} \label{propos} Let $\kappa= 2^{<\kappa}>\aleph_0$ (for instance $\kappa =\beth_\omega$) and let $T\in\Tb^\mathbf{ec}_{<\kappa^+}$ be  $\kappa$-universal over the theory  $T_{\ast}$ from Remark~\ref{ast} (and  hence extend $T_{\ast}\supseteq T_<$).   Then:
\begin{enumerate}
\item $T$ has a model  of cardinality $\kappa$. 
\item If $M$ is a model of $T$ of cardinality $\kappa$  and $M_<= M\restriction \{<\}$, then
\begin{enumerate}
\item $M_<$ is compactly expandable, and
\item $M_<$ is not expandable.
\end{enumerate}
\end{enumerate} 
\end{prop}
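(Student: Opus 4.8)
For part 1, the plan is to invoke the strong Löwenheim--Skolem theorem for $\cf$ recalled in the introduction. Since $T$ extends $T_\ast\supseteq T_<$, every model of $T$ is an infinite dense linear order, so $T$ has an infinite model; moreover $|T|\le|\cf(\tau(T))|=|\tau(T)|+\aleph_0\le\kappa$ because $T\in\Tb^\mathbf{ec}_{<\kappa^+}$ forces $|\tau(T)|<\kappa^+$. As $\kappa>\aleph_0$, i.e.\ $\kappa\ge\aleph_1$, the Löwenheim--Skolem theorem yields a model of $T$ of cardinality exactly $\kappa$.

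For part 2(a), fix $M\models T$ with $|M|=\kappa$, a vocabulary $\tau'\supseteq\{<\}$ with $|\tau'|\le\kappa$, and a first-order $\tau'$-theory $\Sigma$ finitely satisfiable in $M_<$. After a renaming I may assume the symbols of $\tau''=\tau'\smallsetminus\{<\}$ are fresh, i.e.\ disjoint from $\tau(T)$. The key step is to show that the $\cf$-theory $T_\ast\cup\Sigma$ over $\tau_\ast\cup\tau''$ is consistent: given a finite $\Sigma_0\subseteq\Sigma$, choose an expansion $M_0$ of $M_<$ to $\{<\}\cup\tau''$ with $M_0\models\Sigma_0$, and glue $M_0$ to $M\restriction\tau_\ast$ along their common reduct $M_<$ (legitimate since $(\{<\}\cup\tau'')\cap\tau_\ast=\{<\}$) to obtain a $(\tau_\ast\cup\tau'')$-structure modelling $T_\ast\cup\Sigma_0$; compactness of $\cf$ then gives consistency of $T_\ast\cup\Sigma$. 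Next I would complete $T_\ast\cup\Sigma$ and, by items \emph{2} and \emph{4} of Lemma~\ref{amalg}, extend it to some $T''\in\Tb^\mathbf{ec}_{<\kappa^+}$ with $T_\ast\subseteq T''$ (its vocabulary still has cardinality $\le\kappa$). Now $\kappa$-universality of $T$ over $T_\ast$ provides an embedding $f\colon\tau(T'')\to\tau(T)$ over $\tau_\ast$ with $f(T'')\subseteq T$. Transferring the interpretations of $M$ along $f$, i.e.\ interpreting each $R\in\tau(T'')$ as $f(R)^M$, gives an expansion $M^{\#}$ of $M$; by the renaming property of $\cf$, $M^{\#}\models T''$, hence $M^{\#}\models\Sigma$, and since $f$ fixes $\tau_\ast\supseteq\{<\}$ the reduct $M^{\#}\restriction\{<\}$ is still $M_<$. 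Undoing the renaming yields the required expansion of $M_<$, so $M_<$ is compactly expandable.

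For part 2(b), it suffices to exhibit one bad $\Sigma$. I would take $\tau'=\{<,H\}$ with $H$ a new binary predicate and let $\Sigma$ say that $H$ is the graph of a bijection of the domain reversing $<$. Since $M_<$ is a dense linear order without endpoints, quantifier elimination gives $\mathrm{Th}(M_<)=\mathrm{DLO}$, and $(\mathbb{Q},<,\{(x,-x):x\in\mathbb{Q}\})$ models $\Sigma\cup\mathrm{DLO}$, so $\Sigma$ is consistent with $\mathrm{Th}(M_<)$. However, an expansion of $M_<$ modelling $\Sigma$ would provide an order-reversing bijection of $(M_<,<)$, i.e.\ an isomorphism $(M_<,<)\cong(M_<,>)$; this is impossible, because axiom 3 of Definition~\ref{T<} forces $\mathrm{cf}(M_<,<)=\aleph_0$ while $\mathrm{cf}(M_<,>)$ equals the coinitiality of $(M_<,<)$, which the same axiom makes uncountable. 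Hence $M_<$ has no expansion to $\tau'$ satisfying $\Sigma$, so it is not expandable.

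The main obstacle is part 2(a): the delicate point is that finite satisfiability of $\Sigma$ merely in the bare reduct $M_<$ must be leveraged into consistency of $T_\ast\cup\Sigma$ (via the gluing-of-structures trick together with $\cf$-compactness), and one must then carry the construction through the extension to an existentially closed theory and the pull-back along the universality embedding while keeping all vocabularies of cardinality $<\kappa^+$. Parts 1 and 2(b) are short once the Löwenheim--Skolem theorem and, respectively, the order-reversing-bijection example are in place.
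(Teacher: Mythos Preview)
Your proof is correct and, for parts 1 and 2(a), follows essentially the same route as the paper. The paper is terser in 2(a) --- it simply asserts that $T_\ast\cup T'$ is finitely satisfiable in $M_<$ and then extends to $T''\in\Tb^{\mathbf{ec}}_{<\kappa^+}$ --- but your explicit gluing of $M_0$ with $M\restriction\tau_\ast$ along $M_<$ is exactly the content hidden behind that assertion, and the use of $\kappa$-universality and pull-back along $f$ is identical.

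For 2(b) you take a genuinely different example. The paper uses a theory $T'$ in $\{<\}$ plus one extra predicate expressing that the order is \emph{doubly transitive} (every open interval is order-isomorphic to the whole model); this is consistent with DLO (e.g.\ $\mathbb{R}$), but fails in $M_<$ because by axiom~3 the whole order has cofinality $\aleph_0$ while by axiom~4 every open interval has cofinality $>\aleph_0$. Your example --- a single binary predicate coding an order-reversing bijection --- is simpler and uses only axiom~3 (the forward cofinality is $\aleph_0$, the coinitiality is not), so it even works without ever invoking axiom~4. Both arguments are valid; yours is more economical, while the paper's is perhaps more in the spirit of the subsequent Remark, which notes that one could alternatively have chosen $T_<$ so that the order \emph{is} isomorphic to its reverse (in which case your witness would no longer work, but the double-transitivity one still would, via axiom~4).
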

\begin{proof} \emph{1}.  Because $\cf$  satisfies the  L\"owenheim-Skolem theorem down to any uncountable cardinal.

\emph{2 (a)}.  Let  $T^\prime$ be a first-order theory in a vocabulary $\tau^\prime$ containing the symbol $<$ and such that $|\tau^\prime|\leq \kappa$, and assume that  $T^\prime$ is finitely satisfiable in $M_<$.  We can assume that $\tau_{\ast}\cap \tau(T^\prime)=\{<\}$ and then $T_{\ast}\cup T^\prime$ is finitely satisfiable in $M_<$ and can be extended to some $T^{\prime\prime}\in \Tb^\mathbf{ec}_{<\kappa^+}$.  Since $T$ is   $\kappa$-universal over $T_{\ast}$, there is some embedding $f:\tau(T^{\prime\prime})\rightarrow \tau(T)$   over $\tau_<=\{<\}$ (and even over $\tau_{\ast}$)  mapping $T^{\prime\prime}$ into $T$.  Since $M\models T$,   $T^{\prime\prime}$  holds in an expansion of $M$  and therefore in an expansion of $M_<$.

\emph{2 (b)}.  Notice that the first-order theory of $M_<$ is DLO, the theory of dense linear orders without endpoints.  There are models of DLO,  hence elementarily equivalent to $M_<$,   where every open interval is isomorphic to the whole model, e.g., the  ordering of the  real numbers. Borrowing terminology from permutation group theory, they are sometimes called doubly transitive linear orders.  This property  can be expressed adding a new predicate to the language, that is, there is a finite vocabulary $\tau^\prime$ containing $<$ and some first-order theory $T^\prime$ of vocabulary $\tau^\prime$  which is consistent with DLO and  in every model of $T^\prime$ every open interval is order isomorphic to the  whole model.  Since $M_<$ has cofinality $\aleph_0$ and all open intervals have cofinality $>\aleph_0$, it is not doubly transitive and, hence,   no expansion of $M_<$  satisfies $T^\prime$.
\end{proof}

\begin{cor} There are compactly expandable linear dense orderings without endpoints which are not expandable.
\end{cor}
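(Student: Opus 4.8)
The plan is to read off the corollary directly from Proposition~\ref{propos}; all of the substantive work has already been carried out in the preceding sections, so the proof is essentially an assembly of the pieces. First I would fix a cardinal $\kappa$ with $\kappa = 2^{<\kappa} > \aleph_0$, the standard witness being $\kappa = \beth_\omega$: this is a strong limit, so $2^{<\kappa} = \kappa$, and it is uncountable (indeed singular of cofinality $\aleph_0$, so it is the singular case of Theorem~\ref{universal} that applies). Next I would take the countable amalgamation base $T_{\ast} \in \Tb^{\mathbf{ab}}_{<\aleph_1}$ extending $T_<$ furnished by Remark~\ref{ast}, and apply Theorem~\ref{universal} to obtain a theory $T \in \Tb^{\mathbf{ec}}_{<\kappa^+}$ which is $\kappa$-universal over $T_{\ast}$; in particular $T \supseteq T_{\ast} \supseteq T_<$.

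Then I would invoke Proposition~\ref{propos}. Part~(1) gives a model $M \models T$ of cardinality $\kappa$; put $M_< = M \restriction \{<\}$. Because the axioms (1)--(2) of Definition~\ref{T<} are precisely the first-order axioms of DLO and $T$ contains them, $M_<$ is a dense linear order without endpoints. Part~(2)(a) says $M_<$ is compactly expandable and part~(2)(b) says it is not expandable. Thus $M_<$ is the required example and the corollary follows.

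The corollary itself presents no obstacle — it is a one-line deduction once the machinery is in place. If one traces where the real difficulty lies, it is twofold. On the syntactic side, it is the construction in Theorem~\ref{universal} of a theory absorbing every existentially closed extension of a fixed amalgamation base: a transfinite amalgamation argument driven only by compactness of $\cf$, boolean combinations and renaming of predicates, with the singular case needing the careful bookkeeping with the $\kappa_i$'s and the cardinal arithmetic $2^{\kappa_i} \leq \kappa_{i+1}$. On the model-theoretic side, it is the verification of Proposition~\ref{propos}(2): compact expandability is obtained by extending $T_{\ast} \cup T'$ (for any first-order $T'$ finitely satisfiable in $M_<$) to a member of $\Tb^{\mathbf{ec}}_{<\kappa^+}$ and then invoking $\kappa$-universality, while non-expandability is secured by the existence of a \emph{doubly transitive} dense linear order — a model of DLO all of whose open intervals are order-isomorphic to the whole model — which no expansion of $M_<$ can realize, since $M_<$ has cofinality $\aleph_0$ whereas each of its open intervals has uncountable cofinality.
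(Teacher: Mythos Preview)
Your proposal is correct and follows exactly the paper's approach: the paper's proof is simply ``By Proposition~\ref{propos}'', and you have spelled out precisely the assembly of Theorem~\ref{universal}, Remark~\ref{ast}, and Proposition~\ref{propos} that this one-liner encapsulates. There is nothing to add or correct.
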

\begin{proof}  By Proposition~\ref{propos}.
\end{proof}

\begin{remark}
 In the proof of Proposition~\ref{propos} we can use a DLO isomorphic to its inverse.  There are different choices for the theory $T_{<}$ of Definition~\ref{T<}. We can specify the cofinality of the reverse order  $>$ to be $\aleph_0$.  In fact, we can even not require density of the order.
\end{remark}

\bibliographystyle{alpha}

\noindent{\sc
Departament de Matem\`atiques i Inform\`atica\\
Universitat de Barcelona}\\
{\tt e.casanovas@ub.edu}\\

\noindent{\sc
Einstein Institute of Mathematics, Edmond J. Safra Campus,
Givat
Ram,\\
 The Hebrew University of Jerusalem\\
and Department of Mathematics,
 Hill Center - Busch Campus, Rutgers,\\ The State University of New Jersey.}\\
{\tt shelah@math.huji.ac.il}

\end{document}